\documentclass[10pt]{article}
\pdfoutput=1

\usepackage[T1]{fontenc}
\usepackage[utf8]{inputenc}
\usepackage{lmodern}
\usepackage{amsmath,amssymb,amsthm,mathtools}
\usepackage{microtype}
\usepackage[a4paper,top=18mm,bottom=20mm,left=22mm,right=22mm]{geometry}
\usepackage[hidelinks]{hyperref}
\newtheorem{theorem}{Theorem}

\newtheorem{proposition}{Proposition}[section]
\newtheorem{corollary}[proposition]{Corollary}

\newcommand{\PP}{\mathbb P}
\newcommand{\C}{\mathbb C}
\newcommand{\R}{\mathbb R}
\newcommand{\Aut}{\operatorname{Aut}}

\title{The Reye geometry inside the 64 lines of the Schur quartic}
\author{Pawe\l{} Nurowski\\[2mm]
\small Center for Theoretical Physics, Polish Academy of Sciences, Warsaw, Poland}
\date{}

\begin{document}
\begingroup
\centering
{\Large\bfseries The Reye geometry inside the 64 lines of the Schur quartic\par}
\vspace{5pt}
{\normalsize Pawe\l{} Nurowski\par}
{\small Center for Theoretical Physics, Polish Academy of Sciences, Warsaw, Poland\par}
\vspace{7pt}
\endgroup

\begin{abstract}
\small
We identify the classical geometry hidden in the Naskr\k{e}cki--Pokora
$(24_4,32_3)$ configuration on the Schur quartic. Using H\"ohn's identification of the $24$ selected lines with the
$24$ roots of $D_4$, the antipodal involution on the roots induces a
fixed-point-free quotient of the incidence configuration, and this quotient is precisely the
classical Reye configuration.  We also determine the symmetry of the complete
$64$-line incidence geometry: its automorphism group has order $4608$, the two
Naskr\k{e}cki--Pokora configurations form a single orbit, and the stabilizer of
either has order $2304$ (projectively, $576$).  The $64$ lines extend
canonically to a $176$-line arrangement carried by six projectively equivalent
Schur quartics, with $176=16+16+9\cdot16$ and induced surface permutation group
$S_3\times S_3$.  Finally, the antipodal quotient itself extends coherently
through this six-quartic geometry: on each Schur quartic it produces two Reye
configurations sharing the same $16$-element incidence skeleton, and on the
full $176$-line arrangement it gives a compatible global quotient. This
reveals a precise incidence-theoretic connection with classical desmic
geometry, while showing that this connection is not a literal identification
with the two Reye configurations arising from the classical desmic
construction in $\PP^3$.
\end{abstract}

\vspace{-0.5em}

\section{Introduction}

The Schur quartic is the smooth quartic surface
\[
X:\qquad x_0^4-x_0x_1^3-x_2^4+x_2x_3^3=0
\qquad\subset\PP^3_{\C}.
\]
It contains $64$ lines, the maximal possible number on a smooth complex quartic~\cite{DIS17,DR25}.  Naskr\k{e}cki and Pokora recently singled out $24$ of these lines~\cite{NP26} which, together with their $32$ triple intersection points, form a highly symmetric incidence configuration of type $(24_4,32_3)$.  Here and below the notation $(v_r,b_k)$ means that one incidence class has $v$ objects, each incident with $r$ objects of the other class, while the second class has $b$ objects, each incident with $k$ objects of the first.  Thus in the Naskr\k{e}cki--Pokora configuration each of the $24$ selected lines contains four of the distinguished triple points, and each of the $32$ triple points lies on three selected lines.

H\"ohn subsequently showed that this configuration is governed by the root system $D_4$~\cite{H26}: the $24$ selected lines may be labelled by the $24$ roots $\{\pm e_i\pm e_j\}$, in such a way that the incidence relation is expressed by the corresponding $D_4$ root geometry. We shall call such an incidence-preserving identification of the $24$ selected lines with the $24$ roots of $D_4$ a \emph{$D_4$ labelling}. This description explains much of the symmetry, but it leaves a natural geometric question.  What classical incidence geometry is hidden behind this $D_4$ model, and how does the resulting object sit inside the full arrangement of the $64$ lines of $X$?

The first purpose of this note is to answer that question.  We shall show that the classical geometry underlying H\"ohn's $D_4$ model is the Reye configuration: after identifying opposite $D_4$ roots, the Naskr\k{e}cki--Pokora incidence structure becomes precisely the point-line incidence of Reye.  Since this configuration is less familiar than the Schur quartic itself, let us recall its simplest geometric realization before stating the result.  Start with an ordinary cube in affine three-space and regard it inside $\PP^3$.  Take as the $16$ configuration lines the $12$ edges of the cube together with its four body diagonals.  Take as the $12$ configuration points the eight vertices, the centre of the cube, and the three points at infinity determined by the three directions of parallel cube edges.  Every one of these $12$ points lies on four of the $16$ lines, and every one of the $16$ lines contains three of the $12$ points.  This is the classical Reye configuration, of type $(12_4,16_3)$~\cite{Reye82,Grunbaum09,Manivel06}.  Its point-line dual is isomorphic to it; in our quotient the two incidence classes occur naturally in the dual order, as $12$ line-orbits and $16$ point-orbits. Once this quotient geometry has been recognized, a second question arises
naturally: how far does the resulting structure extend beyond a single Schur
quartic?  The second purpose of this note is to show that the $64$ lines of
$X$ belong naturally to a much larger $176$-line arrangement carried by six
projectively equivalent Schur quartics.

The six-surface system then raises a third question, closer in spirit to the
first: does the antipodal Reye quotient itself extend beyond the chosen
$(24_4,32_3)$ configuration?  We shall show that it does. The two commuting
$S_3$ factors acting on the six quartics are each simply transitive, and
transporting H\"ohn's $D_4$ labelling among the six surfaces yields six
induced $D_4$ labellings.  The transformations relating these labellings
realize
\[
\operatorname{Out}(D_4)\cong S_3.
\]
More importantly, a single projective
involution preserves all six quartics and extends the antipodal pairing to
their complete line arrangements. On each Schur quartic the involution has $16$ fixed lines and pairs the
remaining $48$ lines.  Thus the quotient of the $64$-line set consists of
$16$ singleton orbits together with two classes of $12$ two-line orbits:
\[
64=16+24+24
\quad\longrightarrow\quad
40=16+12+12.
\]
Each of the two $12$-element classes, together with the same
$16$-element class of singleton orbits, forms a Reye configuration.
The two $12$-element classes have identical incidence neighbourhoods
in this common $16$-element class.

On the complete six-surface arrangement, the involution fixes the two
common $16$-line cores and pairs the lines in each of the nine
$16$-line cross-intersection blocks.  Hence
\[
176=16+16+9\cdot16
\quad\longrightarrow\quad
104=16+16+9\cdot8.
\]
Thus the quotient consists of $32$ singleton orbits and $72$ two-line
orbits; the latter are naturally grouped into nine classes of eight,
one for each cross-intersection block.

Thus the Reye quotient is not confined to one Naskr\k{e}cki--Pokora
configuration, but extends coherently through the whole six-quartic geometry.

\begin{theorem}[Main geometric statement]\label{thm:main}
Let $\mathcal C_{24}$ denote the Naskr\k{e}cki--Pokora $(24_4,32_3)$ incidence configuration on the Schur quartic $X$.
\begin{enumerate}
\item[(A)] In the $D_4$ labelling, the involution $r\mapsto-r$ is fixed-point free on the $24$ selected lines and induces a fixed-point-free involution on the $32$ triple points.  The quotient incidence structure has $12$ line-orbits and $16$ point-orbits, of valencies $4$ and $3$, and is isomorphic to the Reye configuration.  Thus $\mathcal C_{24}$ is a natural two-sheeted incidence lift of Reye geometry.
\item[(B)] The full $64$-line configuration of $X$ sits in a canonical union of $176$ lines carried by six projectively equivalent Schur quartics.  These six surfaces split into two triples.  Each triple has a common $16$-line core, while the nine intersections between surfaces belonging to opposite triples give nine further, pairwise disjoint, $16$-line blocks.  Hence
\[
176=16+16+9\cdot16=11\cdot16.
\]
Moreover the induced permutation group on the six surfaces is $S_3\times S_3$.
\item[(C)] Each of the two $S_3$ factors in the surface permutation group
acts simply transitively on the six quartics.  Transporting H\"ohn's
$D_4$ labelling across the six surfaces gives six induced $D_4$ labellings.
The transformations relating these labellings realize
\[
\operatorname{Out}(D_4)\cong S_3.
\]
Moreover, the projective involution
\[
\iota=\operatorname{diag}(1,1,-1,-1)
\]
preserves all six quartics.  On each quartic it fixes $16$ lines and pairs
the remaining $48$.  Its quotient line set therefore consists of $16$
singleton orbits and $24$ two-line orbits, the latter splitting naturally
into two classes of $12$:
\[
64=16+24+24
\quad\longrightarrow\quad
40=16+12+12.
\]
Each of the two $12$-element classes, together with the same
$16$-element class of singleton orbits, forms a Reye configuration, and
the two $12$-element classes determine the same set of incidence
neighbourhoods in this common $16$-element class.  Pairing the two elements
that determine the same neighbourhood and identifying each such pair
recovers a single $(12_4,16_3)$ Reye configuration.

On the complete $176$-line arrangement, the same involution fixes the two
common $16$-line cores and pairs the lines in each of the nine
$16$-line cross-intersection blocks.  Consequently its quotient consists
of $32$ singleton orbits and $72$ two-line orbits, naturally grouped as
\[
176=16+16+9\cdot16
\quad\longrightarrow\quad
104=16+16+9\cdot8.
\]
Thus the antipodal Reye quotient of part~\textup{(A)} extends coherently
from the Naskr\k{e}cki--Pokora configuration to each complete
$64$-line Schur configuration and to the full six-quartic arrangement.
\end{enumerate}
\end{theorem}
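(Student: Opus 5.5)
The plan is to treat the three parts separately. Each reduces to an explicit, finite incidence computation once coordinates for the $64$ lines are fixed.

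For (A), I start from H\"ohn's $D_4$ labelling. There the $32$ triple points correspond to the unordered triples of roots $\{r,s,t\}$ with $r+s+t=0$ (equivalently, the $A_2$-subsystems of $D_4$), and each root lies in exactly four such triples.

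Negation sends a zero-sum triple to a zero-sum triple. It is fixed-point free on roots, since $r\ne -r$. It is also fixed-point free on triples: $\{r,s,t\}=\{-r,-s,-t\}$ would force some $r=-s$, and then $t=0$, which is impossible.

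The quotient therefore has $12$ root-pairs $\pm r$, which are the $12$ lines through the origin spanned by $D_4$ roots, and $16$ orbits of triples. I will check that distinct triples through $r$ have distinct images in the quotient, so the valencies $4$ and $3$ survive. Concretely, $\{r,s,t\}$ and $\{-r,-s,-t\}$ cannot both contain $r$.

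To identify the quotient with Reye, I map the $12$ root-lines to the $12$ points $[e_i\pm e_j]\in\PP^3_\R$. I then check that the $16$ triple-orbits become the $16$ lines of the $D_4$/desmic model of the Reye configuration: three points $[r],[s],[t]$ are collinear exactly when $\pm r\pm s\pm t=0$ for some choice of signs.

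For this step I would use the standard coordinatization of Reye's configuration as the $12$ points $e_i\pm e_j$ together with the $16$ lines they span in threes. This matches the cube model of the introduction after a projective change of coordinates, with vertices $(\pm1,\pm1,\pm1,1)$ etc. The identification can be given by an explicit bijection; the dual order of the two incidence classes is then just a renaming.

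For (B), I work with explicit coordinates. The six quartics are the images of $X$ under the permutations and sign/root-of-unity substitutions that exchange the two binary-quartic blocks. In practice that means a small set of monomial projective transformations $g$ such that $g(X)$ is again of Schur form.

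I will write the $64$ lines of each surface as in the standard parametrization via the roots of $x^4-xy^3$. The key steps are:
\begin{enumerate}
\item compute the pairwise intersections of the line sets;
\item show that, within each of the two triples, the three surfaces share the same $16$ lines;
\item show that surfaces from opposite triples share a further $16$, disjoint from everything else;
\item count $2\cdot16+9\cdot16=176$;
\item determine the permutation group on the six surfaces by letting the normalizer act on them.
\end{enumerate}
The last step should give $S_3\times S_3$, one factor permuting within each triple-structure.

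The main obstacle is step 3: disjointness and the exact size $16$ of the cross-intersections, which I expect to verify by direct symbolic computation over $\mathbb Q(\zeta_{12})$.

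For (C), simple transitivity follows once the group is known: each $S_3$ factor has order $6$ and acts freely on the six surfaces. Transported labellings differ by an automorphism of the $24$-line configuration modulo the Weyl group action, and I will compute that these classes realize the triality quotient $\operatorname{Out}(D_4)$.

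For $\iota$, I substitute into each quartic equation, which is invariant because $x_2^4$ and $x_2x_3^3$ are even in $(x_2,x_3)$. I then count fixed lines as those lying in the two eigenplanes' join structure, namely lines meeting both fixed lines $\{x_2=x_3=0\}$ and $\{x_0=x_1=0\}$. This gives $16$ on each surface. The remaining $48$ lines split into $2\cdot12$ orbit-pairs according to whether they belong to $\mathcal C_{24}$ or its complement.

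The Reye statement for each $12$-class follows from (A) together with an explicit check that both classes induce the same incidence sets on the $16$ fixed lines. Finally, on the $176$ lines I check that $\iota$ fixes the cores pointwise and acts freely on each cross block. This gives $104=32+72$.
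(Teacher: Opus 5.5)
Your part (A) is essentially the paper's argument: zero-sum triples are the $A_2$-subsystems, projectivization makes them collinear triples, and a linear change of coordinates lands on the cube model. The genuine gap is in (B). You never specify the six quartics, and the recipe you do indicate cannot produce them. Among images of $X$ under coordinate permutations with sign and root-of-unity rescalings, the only members of the paper's arrangement are the first triple $\Phi_k:\ \phi(x_0,x_1)=\omega^k\phi(x_2,x_3)$, obtained via $\operatorname{diag}(1,1,a,a)$ with $a^3=1$. Exchanging the two binary blocks just returns $X$.

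The second triple comes from the binary Hessian $h(u,v)=v(8u^3+v^3)$ of $\phi(u,v)=u(u^3-v^3)$. It is $\Psi_c:\ h(x_2,x_3)=c\,h(x_0,x_1)$ with $c^3=1$. These are Schur quartics because $h(u,v)=\phi(v,-2u)$, i.e.\ $\Psi_c=P_c^{-1}(X)$ with $P_c(x)=(x_3,-2x_2,cx_1,-2cx_0)$. Matching the monomials $x_3^4$ and $x_2^3x_3$ forces any monomial map between $X$ and $\Psi_c$ to have two entries whose ratio has modulus $2$, so this map lies outside your class.

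This Hessian construction is the missing idea. It also makes (B) structural rather than brute force:
\begin{itemize}
\item Each triple lies in a pencil, so the two cores are the base loci $\{\phi_1=\phi_2=0\}$ and $\{H_1=H_2=0\}$.
\item The $48$ non-core lines of $X$ satisfy $H_2|_L=c(L)H_1|_L$, with each cube root of unity occurring $16$ times.
\item B\'ezout then identifies $\Phi_k\cap\Psi_c$ with these $16$ lines.
\item All eleven blocks are disjoint because $\phi$ and $h$ have no common zero.
\end{itemize}
Without the surfaces, your plan to ``compute the pairwise intersections'' has no input.

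Part (C) has two further problems. First, you predict that one $S_3$ factor permutes each triple. That action is intransitive, so it contradicts the simple transitivity you then say ``follows once the group is known.'' Simple transitivity does not follow from the isomorphism type at all. In the paper both factors, $A=\langle a_2,a_3\rangle$ and $B=\langle b_2,b_3\rangle$, contain involutions exchanging the two triples; for $B$ this is $\sigma=P_1$. Freeness has to be read off from these explicit permutations.

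Second, the Reye property of $(\overline{\mathcal D},\mathcal H)$ is not a consequence of (A). In (A) the $16$ objects are antipodal pairs of triple points, and incidence is ``line passes through point.'' In (C) the $16$ objects are the fixed Schur lines of $\mathcal H$, and incidence is ``the two lines meet.'' You need one of two things, and the proposal has neither:
\begin{itemize}
\item the direct check the paper makes from the incidence table: each $N_{\mathcal H}(x)$ has four elements, the twelve are distinct, each $H\in\mathcal H$ lies in three, and the result is Reye; or
\item an explicit incidence-preserving bijection between $\iota$-orbits of triple points and lines of $\mathcal H$.
\end{itemize}
You must also verify, not assume, that $\iota$ preserves $\mathcal D$ and that $\iota|_{\mathcal D}$ is H\"ohn's $r\mapsto -r$. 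This is not automatic, since the automorphism group contains elements exchanging $\mathcal D$ and $\mathcal D^*$. Only after this check is the $\iota$-quotient an extension of (A).

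Your invariant-line argument is correct and more conceptual than what the paper writes. An $\iota$-stable line on the surface must meet both eigenlines $\{x_0=x_1=0\}$ and $\{x_2=x_3=0\}$, which forces $\phi_1=\phi_2=0$ (respectively $H_1=H_2=0$). On the second triple, however, it again presupposes the Hessian surfaces.
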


\section{From the $D_4$ root system to Reye geometry}

We now prove part~\textup{(A)} of Theorem~\ref{thm:main}.  Let $E=\R^4$ with its standard orthonormal basis $e_1,\ldots,e_4$, and let
\[
\Phi(D_4)=\{\,\pm e_i\pm e_j:1\leq i<j\leq4\,\}\subset E
\]
be the root system of type $D_4$.  H\"ohn's description of the Naskr\k{e}cki--Pokora configuration~\cite{H26} gives a bijection between its $24$ lines and the $24$ roots in $\Phi(D_4)$.  Under this bijection the $32$ triple points are exactly the unordered triples
\[
\{r_1,r_2,r_3\}\subset\Phi(D_4),\qquad r_1+r_2+r_3=0,
\tag{2.1}
\]
and incidence simply means membership of a root in such a triple.  Thus the geometry relevant for the quotient is already contained in the elementary linear algebra of $D_4$.

Consider the antipodal involution $r\mapsto-r$.  It acts freely on the roots and sends every triple in~(2.1) to the opposite triple $\{-r_1,-r_2,-r_3\}$.  Let
\[
\pi:E\setminus\{0\}\longrightarrow\PP(E)\cong\mathbb{RP}^3,
\qquad r\longmapsto[r]=\R r,
\]
be projectivization.  The $24$ roots therefore give $12$ projective points $[r]=[-r]$.  Moreover, if $r_1+r_2+r_3=0$, then the three vectors lie in the two-dimensional subspace $\Pi=\operatorname{span}(r_1,r_2)\subset E$, because $r_3=-r_1-r_2$.  Its projectivization $\PP(\Pi)\cong\mathbb{RP}^1$ is a projective line in $\PP(E)\cong\mathbb{RP}^3$.  Hence the three projective points $[r_1],[r_2],[r_3]$ are collinear; the opposite triple determines the same projective line.  Hence the $32$ triple points give $16$ projective lines.  The involution is free on both incidence classes and carries the four triples through $r$ bijectively to the four triples through $-r$; therefore the quotient retains valencies $4$ and $3$.  We have obtained, without choosing coordinates on the Schur quartic, a $(12_4,16_3)$ point--line configuration in $\mathbb{RP}^3$.

\begin{proposition}\label{prop:d4-reye}
The projective configuration obtained from $\Phi(D_4)$ in this way is the cube realization of the Reye configuration described in the Introduction.
\end{proposition}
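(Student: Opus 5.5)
We would prove Proposition~\ref{prop:d4-reye} by writing down an explicit projective transformation of $\PP(E)$ carrying the $12$ points $[r]$, $r\in\Phi(D_4)$, onto the $12$ Reye points of the cube. We would then check that it carries the $16$ lines spanned by zero-sum root triples onto the $12$ edges and $4$ body diagonals. The natural choice is the linear map
\[
A:\ (x_1,x_2,x_3,x_4)\longmapsto (x_1+x_2,\;x_1+x_3,\;x_1+x_4,\;x_2+x_3+x_4-2x_1)
\]
or, more cleanly, a change of basis adapted to the three "coordinate pairings" of $\{1,2,3,4\}$. We first try the map
\[
B:\ x\longmapsto (y_0,y_1,y_2,y_3)=\bigl(x_1+x_2+x_3+x_4,\; x_1+x_2-x_3-x_4,\; x_1-x_2+x_3-x_4,\; x_1-x_2-x_3+x_4\bigr).
\]
This is the Hadamard transform, and it is invertible. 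In the chart $y_0\neq0$ with affine coordinates $(y_1,y_2,y_3)/y_0$ we compute its effect on the roots.

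\textbf{Roots and points.} Step one is the images of the roots. Up to sign, the $12$ roots are $e_i+e_j$ and $e_i-e_j$.
\begin{itemize}
\item[] For $e_i+e_j$ we get $y_0=2$, and the affine point has two coordinates zero and one coordinate $\pm1$. Together with the complementary root $e_k+e_l$ (same nonzero coordinate with opposite sign), these are the $6$ points $\pm u_m$ on the coordinate axes.
\item[] For $e_i-e_j$ we get $y_0=0$, so these are points at infinity; they come in $6$ directions of the form $(0,\pm1,\pm1)$-type.
\end{itemize}
This is not yet the cube picture: it is the octahedron/cuboctahedral form. So we compose with a second transformation.

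\textbf{Adjusting the model.} The cube model has $8$ vertices $(\pm1,\pm1,\pm1)$, the centre, and $3$ axis points at infinity. Its $4$ diagonal directions and $6$ face directions are not among the points. We therefore pick a different chart. Choose the three mutually orthogonal pairs
\[
\{e_1\pm e_2\},\qquad \{e_3\pm e_4\}
\]
and so on; more concretely, take the hyperplane at infinity to be $x_4=0$ after a suitable basis change. Equivalently, the cleanest route is this. The Weyl group $W(D_4)$ together with triality acts transitively on the $3$ decompositions of the $12$ points into a "$9+3$" pattern. Pick the $3$ points $[e_1+e_2],[e_1-e_2],[e_3+e_4]$ to send to infinity on the axes, and send $[e_3-e_4]$ to the centre.

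\textbf{Incidence check.} Rather than hunt for the matrix by hand, we would argue combinatorially and then realize the result. Both configurations are $(12_4,16_3)$. In the root model, the $12$ points split into $3$ tetrads $\{[e_1\pm e_2],[e_3\pm e_4]\}$ (and the analogous tetrads for $\{13|24\}$ and $\{14|23\}$); within a tetrad, no two points are collinear in the configuration, since the corresponding roots are orthogonal. In the cube model the $3$ tetrads are the following.
\begin{itemize}
\item[] Centre together with the $3$ infinite points.
\item[] The $4$ even vertices.
\item[] The $4$ odd vertices.
\end{itemize}
In both models the non-collinearity graph is exactly $3K_4$ complement-wise, and each pair from different tetrads is collinear.

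We then match the tetrads and fix a bijection inside one of them. The collinear triples, one from each tetrad, must then correspond. A count shows there are $4^2=16$ such triples in each model, forming a pattern determined by a Latin-square structure on the remaining choices. Finally, we exhibit the linear map sending four points of general position plus a fifth to the prescribed ones. By the fundamental theorem of projective geometry this map is unique, and we verify that it maps all $12$ points correctly.

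The \emph{main obstacle} is this final matching. We must produce an explicit $4\times4$ real matrix and check all $16$ lines, which is routine but error-prone. We expect it to reduce to a short table once the tetrads are matched.
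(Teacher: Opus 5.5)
\textbf{Gap: the explicit projective map, which is the whole proof, is never produced.} The step you defer as the ``main obstacle'' is exactly what the paper does. Its map $T$ in (2.2) is adapted to the orthogonal tetrad $\{[e_1\pm e_3],[e_2\pm e_4]\}$. It sends that tetrad to the centre and the three axis points at infinity, and the remaining eight root pairs to $[\pm1:\pm1:\pm1:1]$. It then reads off the sixteen zero-sum lines as four body diagonals and twelve edges. Your ``Adjusting the model'' paragraph makes the same kind of choice, with the tetrad $\{[e_1\pm e_2],[e_3\pm e_4]\}$, but you then switch to a combinatorial matching that does not close the gap. Three problems:
\begin{enumerate}
\item Fixing the tetrad correspondence and a bijection inside one tetrad does not force the correspondence of lines. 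The sixteen lines define a Latin square $T_1\times T_2\to T_3$, so you need bijections on two tetrads. Even then the third is forced only if the two squares are isotopic.
\item That isotopy is not automatic. A $(12_4,16_3)$ configuration with collinearity graph $K_{4,4,4}$ amounts to a Latin square of order $4$, and these fall into two main classes, cyclic $\mathbb{Z}_4$ and Klein $\mathbb{Z}_2^2$. You never check that both models give the Klein square.
\item An incidence isomorphism is weaker than the projective statement being proved. The five-point map would still have to be checked on all twelve points and sixteen lines, which is the computation left undone.
\end{enumerate}
The map $A$ and the Hadamard transform (which gives the octahedral model) are detours.

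Your tetrad choice actually makes the computation immediate.
\begin{itemize}
\item Take as coordinates the inner products with the four mutually orthogonal roots $f_1=e_1+e_2$, $f_2=e_1-e_2$, $f_3=e_3+e_4$, $f_4=e_3-e_4$, i.e.\ $y=(x_1+x_2,\,x_1-x_2,\,x_3+x_4,\,x_3-x_4)$.
\item Then $y(f_k)$ is twice the $k$th coordinate vector. So $[f_1],[f_2],[f_3]$ go to the axis points at infinity and $[f_4]$ goes to the centre.
\item Each remaining root $\pm e_i\pm e_j$ with $i\in\{1,2\}$, $j\in\{3,4\}$ has all four inner products equal to $\pm1$. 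Hence the other eight root pairs go bijectively onto the eight points $[\pm1:\pm1:\pm1:\pm1]$, the cube vertices.
\item The three roots of a zero-sum triple have pairwise inner product $-1$. So they lie in the three distinct tetrads, and each line contains exactly one $[f_k]$.
\item If $r+s=\pm f_k$, then $y(r)+y(s)=\pm y(f_k)$ is supported in the $k$th coordinate. So $y(r)$ and $-y(s)$ differ exactly in that coordinate.
\item For $k\le 3$ this gives a cube edge parallel to the $k$th axis, through its point at infinity. For $k=4$ it gives a pair of opposite vertices, i.e.\ a body diagonal through the centre.
\end{itemize}
Up to relabelling indices and coordinates, this is the paper's map (2.2) and its verification.
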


\begin{proof}
Use homogeneous coordinates $[X:Y:Z:W]$ on $\PP^3$ and consider the invertible linear map
\[
T(x_1,x_2,x_3,x_4)
=(x_1-x_3,\;x_1+x_3,\;x_2+x_4,\;x_2-x_4).
\tag{2.2}
\]
Four antipodal root pairs are sent to
\[
\begin{aligned}
[e_1-e_3]&\mapsto[1:0:0:0],&
[e_1+e_3]&\mapsto[0:1:0:0],\\
[e_2+e_4]&\mapsto[0:0:1:0],&
[e_2-e_4]&\mapsto[0:0:0:1].
\end{aligned}
\tag{2.3}
\]
The first three points are the three points at infinity of the coordinate directions in the affine chart $W\neq0$, and the fourth is the origin, which will be the centre of the cube.  The remaining eight antipodal root pairs are sent precisely to
\[
[\varepsilon_1:\varepsilon_2:\varepsilon_3:1],
\qquad \varepsilon_1,\varepsilon_2,\varepsilon_3\in\{\pm1\},
\tag{2.4}
\]
the eight vertices of the cube $[-1,1]^3$.

It remains only to interpret the $16$ lines.  Among the sixteen projective lines obtained from the zero-sum triples, four pass through the point $[0:0:0:1]$, which is the centre of the affine cube whose eight vertices are given in~(2.4).  If one of these four lines contains a vertex $[a:b:c:1]$, then, because it also passes through the centre, it contains the opposite vertex $[-a:-b:-c:1]$.  The eight vertices form four opposite pairs, so these four lines are precisely the four body diagonals of the cube.  Each of the three points at infinity in~(2.3) lies on four further zero-sum lines; under~$T$ these are exactly the four parallel cube edges in the corresponding coordinate direction.  Thus the remaining twelve lines are the twelve edges of the cube.  This is exactly the Reye incidence configuration.  \qedhere
\end{proof}

The quotient of the original Naskr\k{e}cki--Pokora configuration has the two incidence classes in the opposite order: its $12$ objects come from pairs of lines, while its $16$ objects come from pairs of triple points.  Proposition~\ref{prop:d4-reye} identifies its incidence dual---the configuration obtained by interchanging the two incidence classes---with the standard point--line Reye configuration.  Since the Reye configuration is classically self-dual, i.e. isomorphic to its incidence dual, this proves part~\textup{(A)} of Theorem~\ref{thm:main}.  Geometrically, the quotient is simply the already defined projectivization $r\mapsto[r]$ of the $24$ vertices of the regular $24$-cell, with antipodal vertices identified.

\section{Symmetry of the $64$-line configuration}

Before passing to the larger arrangement of Theorem~\ref{thm:main}\textup{(B)}, we settle a question intrinsic to the $64$ lines themselves.  Let $\Gamma_{64}$ denote the complete incidence geometry of the $64$ lines of $X$, with intersection points distinguished by multiplicity, and put
\[
G=\Aut(\Gamma_{64}).
\]
The eight quadruple points intrinsically distinguish a set $\mathcal H$ of $16$ lines: they are exactly the lines incident with two quadruple points.  The other $48$ lines contain all $64$ ordinary triple points.  If we retain only these triple incidences, we obtain a $3$-uniform hypergraph $\mathcal T$ whose vertices are the Schur lines and whose hyperedges are the triples of lines through an ordinary triple point; the $16$ lines of $\mathcal H$ are isolated vertices of $\mathcal T$.

We shall use one piece of standard finite-group notation in the next
statement, so we recall it explicitly. A \emph{$2$-group} is a finite group whose order is a power of $2$.
An \emph{extraspecial $2$-group} is a non-abelian $2$-group $E$ for which
\[
Z(E)=[E,E]\cong C_2
\]
and for which $E/Z(E)$ is abelian and every nonidentity element of
$E/Z(E)$ has order $2$.  Equivalently, $E/Z(E)$ is a vector space over
the field $\mathbb F_2$. For every
order $2^{1+2n}$ there are exactly two isomorphism types of extraspecial
groups, called the \emph{plus} and \emph{minus} types.  In order
$2^{1+6}=128$ they may be described as
\[
2_{+}^{1+6}\cong D_8\circ D_8\circ D_8,
\qquad
2_{-}^{1+6}\cong Q_8\circ D_8\circ D_8,
\]
where $\circ$ denotes central product: the central involutions of the
three factors are identified.  Equivalently, the two types can be
distinguished by their elements of order $2$: the plus type has $71$
nonidentity elements of order $2$, whereas the minus type has $55$.
These are standard facts about extraspecial groups; see, for example,
\cite{Atlas85}.

We now give the concrete model of the plus-type group that will be used
below.  Let
\[
X=\begin{pmatrix}0&1\\1&0\end{pmatrix},
\qquad
Z=\begin{pmatrix}1&0\\0&-1\end{pmatrix},
\]
and let $X_i,Z_i$ act on
$(\R^2)^{\otimes3}\cong\R^8$ as $X$ or $Z$, respectively, on the
$i$th tensor factor and as the identity on the other two.  Put
\[
E=
\left\langle
-I,\,
X_1,Z_1,\,
X_2,Z_2,\,
X_3,Z_3
\right\rangle
\subset O(8).
\tag{3.1}
\]
For each $i$ the subgroup
\[
E_i=\langle -I,X_i,Z_i\rangle
\]
is a dihedral group $D_8$: the elements $X_i$ and $Z_i$ are involutions,
while $X_iZ_i$ has order $4$ and $(X_iZ_i)^2=-I$.  The three groups
$E_i$ commute with one another and have the same central involution
$-I$.  Consequently
\[
E=E_1\circ E_2\circ E_3
   \cong D_8\circ D_8\circ D_8
   \cong 2_{+}^{1+6}.
\]
Thus the notation $2_{+}^{1+6}$ used below refers to this explicit
plus-type extraspecial group of order $128$, realized here as the real
three-qubit Pauli group.

\begin{proposition}[Symmetry and the Naskr\k{e}cki--Pokora stabilizer]\label{prop:schur64-symmetry}
The full abstract automorphism group of the Schur $64$-line incidence geometry is
\[
G\cong 2_{+}^{1+6}\rtimes(S_3\times S_3),\qquad |G|=128\cdot36=4608.
\tag{3.2}
\]
Here $2_{+}^{1+6}$ denotes the plus-type extraspecial group described
in~(3.1), and $\rtimes$ means that the extension by
$S_3\times S_3$ is split.  After the $16$ isolated vertices $\mathcal H$ are removed, $\mathcal T$ has exactly two connected components $\mathcal D$ and $\mathcal D^*$, each on $24$ lines with $32$ triple hyperedges.  The group $G$ acts transitively on the pair $\{\mathcal D,\mathcal D^*\}$, and therefore
\[
|\operatorname{Stab}_G(\mathcal D)|=2304.
\tag{3.3}
\]
The projective automorphism group has order $1152$, and the projective stabilizer of either component has order $576$.
\end{proposition}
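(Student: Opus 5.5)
We would prove Proposition~\ref{prop:schur64-symmetry} by writing down the $64$ lines of $X$ explicitly and computing the automorphism group of the resulting finite incidence structure. The computation would be organized around the structure singled out just before the statement: the distinguished set $\mathcal H$ and the hypergraph $\mathcal T$.

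\emph{Step 1: the line list and the invariants.} Since the equation of $X$ separates as $f(x_0,x_1)=f(x_2,x_3)$ with $f(u,v)=u^4-uv^3$, the lines fall into two kinds. The first kind consists of the lines joining a zero of $f$ on the line $x_2=x_3=0$ to a zero of $f$ on the line $x_0=x_1=0$; there are $16$ of these. The second kind consists of the graphs of the $48$ projective automorphisms of $\PP^1$ preserving the binary form $f$ up to the relevant scalar. Here $f$ is the equianharmonic (tetrahedral) quartic, with stabilizer of order $12$ in $PGL_2$; together with the fourth roots of unity this gives the $48$ lines.

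From the list we compute all pairwise intersections and the multiplicity of every intersection point, that is, the number of lines through it. This should give the $8$ quadruple points and the $64$ ordinary triple points. We then check directly three things:
\begin{enumerate}
\item[(i)] the lines meeting two quadruple points are exactly the $16$ lines of the first kind, which form $\mathcal H$;
\item[(ii)] every triple point lies only on lines of the second kind;
\item[(iii)] the hypergraph $\mathcal T$ restricted to the $48$ remaining lines has exactly two connected components $\mathcal D,\mathcal D^*$, each with $24$ vertices and $32$ hyperedges.
\end{enumerate}
Check (iii) is a breadth-first search. One of the two components is the Naskr\k{e}cki--Pokora configuration $\mathcal C_{24}$, which serves as a consistency check against Part~(A).

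\emph{Step 2: computing $G$.} We encode $\Gamma_{64}$ as a vertex-coloured bipartite graph. One colour class is the $64$ lines. The other colour classes are the intersection points, coloured by multiplicity: double, triple and quadruple points. We then compute its automorphism group with a standard canonical-labelling tool (nauty/bliss, or GAP) and confirm that $|G|=4608$.

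For the structural claim (3.2) we proceed in four substeps.
\begin{enumerate}
\item[(a)] Compute $O_2(G)$ and verify that it has order $128$, has centre and derived subgroup of order $2$, and has elementary abelian central quotient.
\item[(b)] Count its involutions. Finding $71$ of them identifies the plus type $2_+^{1+6}$ by the criterion recalled before~(3.1).
\item[(c)] Verify that $G/O_2(G)\cong S_3\times S_3$.
\item[(d)] Exhibit an explicit complement of order $36$, which shows the extension is split.
\end{enumerate}
A natural candidate for the complement in (d) is the $S_3\times S_3$ coming from the binary tetrahedral symmetries acting separately on $(x_0,x_1)$ and on $(x_2,x_3)$, restricted to their order-$3$ and reflection parts.

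\emph{Step 3: the stabilizer and the projective counts.} Since $G$ preserves the multiplicity colouring, it preserves $\mathcal H$ and $\mathcal T$, and hence permutes the set $\{\mathcal D,\mathcal D^*\}$. To show transitivity it suffices to exhibit one automorphism swapping the two components. The candidate is the swap $(x_0,x_1)\leftrightarrow(x_2,x_3)$ composed with a suitable scaling, because this maps the equation of $X$ to its negative and so preserves $X$. By orbit--stabilizer this gives $|\operatorname{Stab}_G(\mathcal D)|=4608/2=2304$.

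For the projective statements we compute the subgroup of $PGL_4(\C)$ preserving $X$. This is the group of linear maps preserving the equation up to scalar, which for this separated form is generated by the two binary symmetry groups, the diagonal root-of-unity scalings, and the swap. We then count it as $1152$, confirm that it injects into $G$ with index $4$, and check that exactly half of it stabilizes $\mathcal D$, giving $576$.

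The main obstacle is the identification of the isomorphism type in (3.2), as opposed to the order: recognizing the plus type and proving the extension is split. If constructing an explicit complement proves awkward, we would fall back on a direct computational check. We would first verify that $O_2(G)$ is complemented, using a complement search in GAP. We would then match $O_2(G)$ against the explicit model~(3.1) by finding generators that satisfy the Pauli relations.
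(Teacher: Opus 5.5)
Your plan follows the same route as the paper's proof. That proof is also a finite exact computation: $|G|=4608$, a normal subgroup of order $128$ with $71$ involutions, quotient $S_3\times S_3$, a split complement, orbit--stabilizer, and projective order $1152$. Taking $N=O_2(G)$ is legitimate, since $O_2(S_3\times S_3)=1$. However, two of the explicit witnesses you name are wrong.

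\textbf{The swap does not exchange the components.} The factor swap $\tau\colon(x_0,x_1)\leftrightarrow(x_2,x_3)$ preserves each of $\mathcal D$ and $\mathcal D^*$. To see this, write the $48$ lines outside $\mathcal H$ as graphs $\ell_A=\{(p,Ap)\}$ with $A\in\Sigma:=\{A\in GL_2(\mathbb C): f\circ A=f\}$, where $|\Sigma|=48$.
\begin{itemize}
\item Two such lines meet, $\ell_A\cap\ell_B\neq\varnothing$, iff $A^{-1}B$ fixes a nonzero vector.
\item The triple points on $\ell_A$ are the points $(p,Ap)$ whose stabilizer $\Sigma_p$ is generated by an order-$3$ complex reflection, such as $\operatorname{diag}(1,\omega)$.
\item These reflections generate an index-$2$ subgroup $R\cong SL(2,3)$, the Shephard--Todd group $G_4$.
\end{itemize}
Hence the two components are $\{\ell_A:A\in R\}$ and $\{\ell_A:A\in iR\}$, distinguished by $\det A\in\mu_3$ versus $\det A\in-\mu_3$. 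Since $\tau(\ell_A)=\ell_{A^{-1}}$, the swap preserves each component. So does $\iota$, because $-I\in R$. An element that does exchange them is $\operatorname{diag}(1,1,i,i)$, which sends $\ell_A$ to $\ell_{iA}$. Your ``swap composed with a suitable scaling'' works only for a scaling of this kind. The reason you give for the scaling (the sign of the equation) plays no role, since $\tau$ alone already preserves $X$.

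\textbf{The proposed complement does not exist inside the binary symmetries.} The projective stabilizer of $f$ is $A_4$, which contains no $S_3$. The projective automorphisms acting separately on the two factors form a group of order $576$. Its normal Sylow $2$-subgroup, lying over $V_4\times V_4$, has quotient $C_3\times C_3$. Since $S_3\times S_3$ has no nontrivial normal $2$-subgroup, this group maps onto only $C_3\times C_3$ in $G/N$.

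The involutions of a complement must therefore come from $\tau$ and from non-projective automorphisms; recall $[G:\mathrm{PAut}]=4$. A working choice is
\[
C=\langle\operatorname{diag}(1,\omega,1,1),\ \operatorname{diag}(1,1,1,\omega),\ \tau,\ \kappa\rangle ,
\]
with $\kappa$ complex conjugation, which is legitimate because $X$ is defined over $\mathbb Q$.
\begin{itemize}
\item $C\cong S_3\times S_3$, since $\tau$ swaps the two order-$3$ generators and $\kappa$ inverts both.
\item $C$ acts faithfully on the lines.
\item $C$ meets $N$ trivially, because every nontrivial normal subgroup of $S_3\times S_3$ contains an element of order $3$.
\end{itemize}
Hence $C$ maps isomorphically onto $G/N$ and is a complement. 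Your GAP fallback would find such a complement anyway. As stated, though, the explicit guesses in Step~2(d) and Step~3 would both fail.
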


\begin{proof}
The complete $64$-line arrangement has $336$ ordinary double
intersection points, $64$ ordinary triple points, and $8$ quadruple
points.  The $16$ lines of $\mathcal H$ contain no ordinary triple
point, whereas each of the remaining $48$ lines contains exactly four.
Consequently the triple-point hypergraph $\mathcal T$ defined above has,
after the isolated vertices $\mathcal H$ are removed, exactly two
connected components.  Each component contains $24$ lines and $32$
triple hyperedges; these are $\mathcal D$ and $\mathcal D^*$.

We now determine the full automorphism group of the multiplicity-labelled
incidence geometry $\Gamma_{64}$.  This is a finite problem: an
automorphism is a permutation of the $64$ lines preserving all their
double, triple and quadruple intersections.  The exact permutation
calculation gives a group $G$ of order
\[
|G|=4608,
\]
in agreement with the published order of the Schur-line incidence
automorphism group~\cite{DR25}.

To identify the structure of $G$, we examine its normal subgroups.  The
exact permutation calculation finds a normal subgroup
\[
N\triangleleft G,\qquad |N|=128,
\]
such that
\[
Z(N)=[N,N]\cong C_2,
\qquad
N/Z(N)\cong\mathbb F_2^6.
\]
By the definition recalled above, $N$ is therefore an extraspecial group
of order $2^{1+6}$.  There are only two isomorphism types of such groups,
the plus and minus types.  The same exact permutation calculation finds
that $N$ has $71$ nonidentity elements of order $2$.  By the
classification recalled above this selects the plus type, since the
minus type has $55$.  Hence
\[
N\cong2_{+}^{1+6}.
\]
In particular, $N$ is isomorphic to the explicit Pauli-matrix group
defined in~(3.1).

The quotient group has order
\[
|G/N|=4608/128=36,
\]
and its exact permutation action identifies it with
\[
G/N\cong S_3\times S_3.
\]
Moreover, the calculation exhibits a subgroup of $G$ of order $36$
whose projection onto $G/N$ is an isomorphism.  Hence the extension
splits and
\[
G\cong2_{+}^{1+6}\rtimes(S_3\times S_3),
\]
which proves~(3.2).

The action of $G$ exchanges the two components
$\mathcal D$ and $\mathcal D^*$, so orbit--stabilizer gives
\[
|\operatorname{Stab}_G(\mathcal D)|=4608/2=2304.
\]
The subgroup induced by projective automorphisms of the Schur quartic
has order $1152$ and also exchanges the two components.  Its stabilizer
of either component therefore has order $1152/2=576$.
\end{proof}

\begin{corollary}[Exhaustivity of the Naskr\k{e}cki--Pokora pair]\label{cor:only-two-NP}
The Schur $64$-line configuration contains exactly two subconfigurations of Naskr\k{e}cki--Pokora type $(24_4,32_3)$, namely $\mathcal D$ and $\mathcal D^*$.
\end{corollary}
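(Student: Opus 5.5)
The plan is to show that any sub-configuration of type $(24_4,32_3)$ inside $\Gamma_{64}$ must coincide with one of the two components $\mathcal D,\mathcal D^*$ found in Proposition~\ref{prop:schur64-symmetry}. Such a subconfiguration $\mathcal C$ consists of $24$ Schur lines and $32$ distinguished points, each point lying on exactly three lines of $\mathcal C$ and each line of $\mathcal C$ carrying four of these points. The first step is to decide which intersection points of the arrangement can play the role of the $32$ distinguished points. Double points cannot, since they lie on only two lines. A triple point of the $(24_4,32_3)$ configuration is a point where three selected lines meet. This could a priori be an ordinary triple point of $X$, or a quadruple point at which we select three of its four lines. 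So we must exclude the quadruple points.

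To exclude them, I would use the structure of $\mathcal H$. Each quadruple point lies on lines that are incident with two quadruple points, since the lines of $\mathcal H$ are exactly those meeting two quadruple points. I would also use that a line of $\mathcal H$ contains no ordinary triple point. The exact incidence data are needed here: how many lines of $\mathcal H$ pass through each quadruple point, and whether every line through a quadruple point lies in $\mathcal H$. With $8$ quadruple points, $16$ lines, and each line through two of them, the count gives $8\cdot 4=32=16\cdot 2$. So all four lines through a quadruple point lie in $\mathcal H$.

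Now suppose a line $\ell\in\mathcal H$ belongs to $\mathcal C$. Its four distinguished points must be triple-or-higher points on $\ell$, but $\ell$ carries only its two quadruple points. This is a contradiction, so $\mathcal C\cap\mathcal H=\emptyset$. Consequently no quadruple point can be a distinguished point of $\mathcal C$ either, because all lines through it lie in $\mathcal H$. Hence all $32$ points of $\mathcal C$ are ordinary triple points, and each is a full hyperedge of $\mathcal T$ with all three lines in $\mathcal C$.

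The second step is the counting and connectivity argument. The $24$ lines of $\mathcal C$ lie among the $48$ non-$\mathcal H$ lines. Each of these lies on exactly four ordinary triple points, so every triple point on a line of $\mathcal C$ must be one of that line's four distinguished points. The hyperedges of $\mathcal C$ therefore form a union of hyperedges of $\mathcal T$ closed under incidence: any hyperedge meeting a line of $\mathcal C$ lies in $\mathcal C$. Thus the line set of $\mathcal C$ is a union of connected components of $\mathcal T\setminus\mathcal H$. By Proposition~\ref{prop:schur64-symmetry} these components are $\mathcal D$ and $\mathcal D^*$, each with $24$ lines and $32$ hyperedges, so $\mathcal C$ is exactly one of them. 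Finally, each of $\mathcal D,\mathcal D^*$ is indeed of type $(24_4,32_3)$, since $\mathcal D\cong\mathcal C_{24}$ and $G$ exchanges the two components.

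The main obstacle is the first step: verifying rigorously that every line through a quadruple point belongs to $\mathcal H$. This relies on the precise local data of the arrangement, namely that each quadruple point has exactly four lines and each $\mathcal H$-line meets exactly two quadruple points. The double count only works if no non-$\mathcal H$ line passes through a quadruple point. A non-$\mathcal H$ line could a priori pass through exactly one quadruple point, so the double count alone may not settle it. I would first try the count. Failing that, I would check directly from the explicit $64$ lines, or from the finite incidence computation already used in Proposition~\ref{prop:schur64-symmetry}, that the quadruple points lie only on $\mathcal H$-lines.
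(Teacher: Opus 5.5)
Your proposal is correct and follows the paper's route: exclude the lines of $\mathcal H$, then use that each of the remaining $48$ lines carries exactly four ordinary triple points to close $\mathcal C$ under the hyperedges of $\mathcal T$, which forces $\mathcal C$ to be a full $24$-line component, $\mathcal D$ or $\mathcal D^*$. Your extra step ruling out quadruple points as distinguished points is something the paper's short proof tacitly assumes when it says each line of $\mathcal C$ must contain four ordinary triple points, and your closing worry about that step is unfounded: the $8$ quadruple points carry $8\cdot 4=32$ point--line incidences in total, and the $16$ lines of $\mathcal H$ already account for at least $16\cdot 2=32$ of them, so no line outside $\mathcal H$ can pass through even one quadruple point and no further check is needed.
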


\begin{proof}
If $\mathcal C$ is such a configuration, every one of its lines must contain four ordinary triple points, so $\mathcal C$ contains no line of $\mathcal H$.  Starting from any $L\in\mathcal C$, all four triple points on $L$, and hence the other two lines through each of them, are forced to lie in $\mathcal C$.  Iterating this closure forces the whole connected component of $L$ in $\mathcal T$, which by Proposition~\ref{prop:schur64-symmetry} has exactly $24$ lines and is either $\mathcal D$ or $\mathcal D^*$.  Hence $\mathcal C$ is one of these two components.
\end{proof}

We shall next leave the single surface $X$ and turn to the six-quartic
geometry underlying parts~\textup{(B)} and~\textup{(C)} of
Theorem~\ref{thm:main}.

\section{Six Schur quartics and the $176$-line arrangement}
This section proves parts~\textup{(B)} and~\textup{(C)} of
Theorem~\ref{thm:main}.  We first construct the six Schur quartics and
their $176$-line arrangement, thereby proving part~\textup{(B)}.  We then
study the additional structure carried by this six-surface system and show
how the Reye quotient of part~\textup{(A)} extends to the full line
arrangement, which proves part~\textup{(C)}.

\subsection{Construction and symmetry of the $176$-line arrangement}
\begin{proof}[Proof of Theorem~\ref{thm:main}\textup{(B)}]
Put
\[
\omega=e^{2\pi i/3},\qquad
\phi(u,v)=u(u^3-v^3),\qquad
h(u,v)=v(8u^3+v^3),
\]
where, up to a nonzero scalar, $h$ is the determinant of the binary Hessian of $\phi$, and abbreviate
\(
\phi_1=\phi(x_0,x_1),\ \phi_2=\phi(x_2,x_3),\
H_1=h(x_0,x_1),\ H_2=h(x_2,x_3).
\)
The first triple of quartics is
\[
\Phi_k:\quad \phi_1-\omega^k\phi_2=0,
\qquad k=0,1,2,
\tag{4.1}
\]
with \(\Phi_0=X\).  If \(a^3=1\), the diagonal projective map
\[
D_a=\operatorname{diag}(1,1,a,a)
\tag{4.2}
\]
sends \(\Phi_k\) to another member of~(4.1); in particular
\(D_{\omega^2}(\Phi_0)=\Phi_1\) and
\(D_{\omega}(\Phi_0)=\Phi_2\).  Thus all three are Schur quartics.
For \(k\ne l\), the two equations \(\Phi_k=\Phi_l=0\) imply
\(\phi_1=\phi_2=0\).  Since \(\phi\) has four distinct zeros in \(\PP^1\), this common locus is the union of exactly \(4\cdot4=16\) lines.  Denote this common core by \(\mathcal H\).

The second triple is obtained from the Hessian quartics:
\[
\Psi_c:\quad H_2-cH_1=0,
\qquad c^3=1.
\tag{4.3}
\]
Each \(\Psi_c\) is again projectively equivalent to \(X\).  Indeed the linear map
\[
P_c(x_0,x_1,x_2,x_3)=(x_3,-2x_2,cx_1,-2cx_0)
\tag{4.4}
\]
satisfies \(P_c^2=-2c\,I\) and
\[
(\phi_1-\phi_2)(P_cx)=H_2-cH_1.
\tag{4.5}
\]
Hence every \(\Psi_c\) is a smooth Schur quartic with $64$ lines.  If
\(c\ne d\), then \(\Psi_c\cap\Psi_d\) is given by \(H_1=H_2=0\).
The binary quartic \(h\) also has four distinct zeros, so the three
surfaces~(4.3) have a second common $16$-line core, denoted \(\mathcal K\).

It remains to understand the intersections between the two triples.  The
$48$ lines of $X=\Phi_0$ outside the common core $\mathcal H$ are precisely
the lines of $\mathcal D\sqcup\mathcal D^*$.  Restricting the two binary
quartics $H_1$ and $H_2$ to each of these $48$ lines gives
\[
H_2|_L=c(L)H_1|_L,
\qquad c(L)^3=1.
\tag{4.6}
\]
This is a finite exact calculation in the equations of the Schur lines.
The three possible values of $c(L)$ each occur on exactly $16$ of the
$48$ lines.  Consequently $\Phi_0$ and each $\Psi_c$ contain exactly
$16$ distinct common lines.

Moreover \(D_a(\Psi_c)=\Psi_{ac}\); applying~(4.2) therefore shows that
\emph{every} pair \((\Phi_k,\Psi_c)\) contains $16$ distinct common lines.  Denote these lines by
\(\mathcal B_{k,c}\).  Since two distinct smooth quartics have complete-intersection degree $16$, B\'ezout's theorem now gives
\[
\Phi_k\cap\Psi_c=\bigcup_{L\in\mathcal B_{k,c}}L
\qquad\text{scheme-theoretically}.
\tag{4.7}
\]

The zeros of \(\phi\) and \(h\) in \(\PP^1\) are disjoint.  Indeed, on a line of
\(\mathcal H\) the two fixed binary directions are zeros of \(\phi\) but not of \(h\), so
\(H_1=A s^4\) and \(H_2=B t^4\) with \(A,B\ne0\); hence \(H_2-cH_1\) cannot vanish identically.  Thus no line of \(\mathcal H\) lies on any \(\Psi_c\).  The same argument with \(\phi\) and \(h\) interchanged shows that no line of \(\mathcal K\) lies on any \(\Phi_k\).  It follows from the pairwise-intersection descriptions above that the eleven blocks
\[
\mathcal H,\qquad \mathcal K,\qquad
\mathcal B_{k,c}\quad(k=0,1,2,\ c^3=1)
\tag{4.8}
\]
are pairwise disjoint.  For fixed \(k\), the four blocks
\(\mathcal H\) and \(\mathcal B_{k,c}\) contain
\(16+3\cdot16=64\) lines, hence exhaust the lines of \(\Phi_k\); similarly
\(\mathcal K\) together with the three \(\mathcal B_{k,c}\) exhausts every
\(\Psi_c\).  Therefore the union of the six Schur quartics contains exactly
\[
\boxed{176=16+16+9\cdot16=11\cdot16}
\tag{4.9}
\]
distinct lines.

Finally let
\[
\mathcal S=\{\Phi_0,\Phi_1,\Phi_2\}\cup\{\Psi_c:c^3=1\}.
\]
To determine the symmetry induced on these six surfaces, we use the complete
finite incidence structure of the $176$ lines together with the information
specifying on which of the six quartics each line lies.  Thus each surface is
represented by its $64$-line subset of the common $176$-line set.  An
automorphism of this finite structure permutes these six subsets and hence
induces a permutation of $\mathcal S$.

Let $G_{\mathrm{surf}}$ denote the resulting permutation group on
$\mathcal S$.  The exact finite permutation calculation gives
\[
|G_{\mathrm{surf}}|=36.
\]
Relabel the second triple as $C_1,C_2,C_3$ so that among the induced
permutations are the two generating pairs
\[
\begin{aligned}
a_2&=(\Phi_0 C_1)(\Phi_1 C_3)(\Phi_2 C_2),&
a_3&=(\Phi_0\Phi_1\Phi_2)(C_1C_2C_3),\\
b_2&=(\Phi_0 C_1)(\Phi_1 C_2)(\Phi_2 C_3),&
b_3&=(\Phi_0\Phi_1\Phi_2)(C_1C_3C_2).
\end{aligned}
\tag{4.10}
\]
The groups $A=\langle a_2,a_3\rangle$ and $B=\langle b_2,b_3\rangle$ are both
isomorphic to $S_3$; they commute elementwise and have trivial intersection.
Thus $|AB|=36$, and comparison with $|G_{\mathrm{surf}}|=36$ gives
\[
G_{\mathrm{surf}}=A\times B\cong S_3\times S_3.
\tag{4.11}
\]
This proves part~\textup{(B)}.
\end{proof}

\subsection{Outer $D_4$ labellings and the double-Reye fold}

\begin{proof}[Proof of Theorem~\ref{thm:main}\textup{(C)}]
Part~\textup{(B)} provides the six quartics and their complete
$176$-line arrangement.  We now examine two further structures carried by
this system: first the relation between the six surfaces and the outer
automorphisms of the $D_4$ root system, and then the extension of the
antipodal Reye quotient of Section~2 to the complete line arrangements.

Recall from~(4.11) that
\[
G_{\mathrm{surf}}=A\times B,
\qquad A\cong B\cong S_3,
\]
where both $A$ and $B$ act on the six-element set
\[
\mathcal S=\{\Phi_0,\Phi_1,\Phi_2,C_1,C_2,C_3\}
\]
through the permutations displayed in~(4.10).  Each of these two actions is
simply transitive.  The full group $G_{\mathrm{surf}}$, of order $36$, is
therefore transitive but not simply transitive on $\mathcal S$; the
stabilizer of any one surface has order $6$.

It is useful to regard the two commuting actions as a left and a right
action.  We let $B$ act from the left by its given permutation action,
\[
b\cdot Y:=b(Y),
\qquad b\in B,\quad Y\in\mathcal S,
\]
and turn the given $A$-action into a right action by defining
\[
Y\cdot a:=a^{-1}(Y),
\qquad a\in A.
\]
Since $A$ and $B$ commute in $G_{\mathrm{surf}}$, these two actions commute:
\[
b\cdot(Y\cdot a)=(b\cdot Y)\cdot a.
\]
Both are simply transitive.  In this situation we shall say that
$\mathcal S$ is a $(B,A)$-\emph{bitorsor}: by definition, this means a set
equipped with commuting simply transitive left $B$- and right $A$-actions.
Since $A\cong B\cong S_3$, it may equivalently be called an
$S_3$-bitorsor.

Choosing one surface, say $\Phi_0$, as a base surface now gives bijections
\[
B\longrightarrow\mathcal S,\qquad b\longmapsto b\cdot\Phi_0,
\]
and
\[
A\longrightarrow\mathcal S,\qquad a\longmapsto\Phi_0\cdot a.
\]
Thus the choice of $\Phi_0$ identifies the six surfaces with the six
elements of either copy of $S_3$.  This identification is not canonical:
it depends on the choice of the base surface.

We now make the $B$-action concrete projectively.  Set
\[
\rho=D_{\omega^2},\qquad \sigma=P_1,
\]
using the projective transformations defined in~(4.2) and~(4.4).
Their induced permutations of $\mathcal S$ are
\[
\rho=(\Phi_0\,\Phi_1\,\Phi_2)(C_1\,C_3\,C_2),
\qquad
\sigma=(\Phi_0\,C_1)(\Phi_1\,C_2)(\Phi_2\,C_3),
\]
which are precisely $b_3$ and $b_2$, respectively, in~(4.10).
Moreover, in $\operatorname{PGL}_4$,
\[
\rho^3=\sigma^2=1,
\qquad
\sigma\rho\sigma=\rho^{-1}.
\]
Hence
\[
B=\langle\rho,\sigma\rangle\cong S_3.
\]
Using the bijection
\[
B\longrightarrow\mathcal S,\qquad g\longmapsto g(\Phi_0),
\]
the six group elements correspond explicitly to the six surfaces as
\[
\begin{array}{c|cccccc}
g
   &1&\rho&\rho^2&\sigma&\sigma\rho&\sigma\rho^2\\
\hline
g(\Phi_0)
   &\Phi_0&\Phi_1&\Phi_2&C_1&C_2&C_3 .
\end{array}
\]
Thus $\{\Phi_0,\Phi_1,\Phi_2\}$ corresponds exactly to the three even
elements of $S_3$, while $\{C_1,C_2,C_3\}$ corresponds to the three odd
elements.

Transporting H\"ohn's $D_4$ labelling by these projective maps gives one
induced $D_4$ labelling on each of the six surfaces.  To describe the
transformations relating these labellings, recall the Euclidean space
$E=\mathbb R^4$ and the root system $\Phi(D_4)$ introduced in Section~2. In the same space consider the root system
\[
\Phi(F_4)
=
\Phi(D_4)
\;\cup\;
\{\pm e_i:1\leq i\leq4\}
\;\cup\;
\left\{
\frac12(\pm e_1\pm e_2\pm e_3\pm e_4)
\right\},
\]
where all choices of signs occur in the last set.  Its $24$ long roots
are precisely the roots of $\Phi(D_4)$.

Let $W(F_4)$ denote the Weyl group of $\Phi(F_4)$ and let $W(D_4)$
denote the Weyl group of its long-root subsystem $\Phi(D_4)$.  We shall
use the standard identification
\[
\operatorname{Aut}\Phi(D_4)
   \cong W(F_4)
   \cong W(D_4)\rtimes S_3,
\]
where the factor $S_3$ is the automorphism group of the Dynkin diagram
of type $D_4$; see, for example,
\cite{BourbakiLie,Koca03}.  In particular,
\[
\operatorname{Out}(D_4)
:=
\operatorname{Aut}\Phi(D_4)/W(D_4)
\cong
W(F_4)/W(D_4)
\cong S_3.
\tag{4.12}
\]
We shall call two $D_4$ labellings \emph{outer-equivalent} if the
automorphism of $\Phi(D_4)$ relating them belongs to $W(D_4)$.

The projective transformations $\rho$ and $\sigma$ transport the fixed
$D_4$ labelling on $\Phi_0$ and thereby induce permutations of the
$24$ roots in $\Phi(D_4)$.  These permutations preserve the root
geometry and hence belong to
\[
\operatorname{Aut}\Phi(D_4)\cong W(F_4).
\]
Let
\[
\bar\rho,\bar\sigma\in
\operatorname{Aut}\Phi(D_4)/W(D_4)
=
\operatorname{Out}(D_4)
\]
be their classes modulo the Weyl group $W(D_4)$.

The induced permutations of the $24$ roots are finite, so their classes
modulo $W(D_4)$ can be determined exactly.  The calculation gives
$\operatorname{ord}(\bar\rho)=3$, $\operatorname{ord}(\bar\sigma)=2$, and we have
\[\bar\rho{}^3=1,\qquad \bar\sigma{}^2=1, 
\qquad
\bar\sigma\,\bar\rho\,\bar\sigma=\bar\rho^{-1}.
\]
Hence the subgroup generated by $\bar\rho$ and $\bar\sigma$ is the
dihedral group of order $6$, equivalently
\[
\langle\bar\rho,\bar\sigma\rangle\cong S_3.
\]
But by~(4.12)
\[
\operatorname{Out}(D_4)
=
W(F_4)/W(D_4)
\cong S_3
\]
also has order $6$.  Therefore
\[
\langle\bar\rho,\bar\sigma\rangle
=
\operatorname{Out}(D_4).
\]

Consequently the concrete surface action induces an isomorphism
\[
B=\langle\rho,\sigma\rangle
\longrightarrow
\operatorname{Out}(D_4),
\qquad
\rho\longmapsto\bar\rho,\quad
\sigma\longmapsto\bar\sigma.
\]
It follows that the six $D_4$ labellings obtained from the $B$-orbit of
the fixed labelling on $\Phi_0$ represent exactly the six
outer-equivalence classes of $D_4$ labellings.

Before turning to the antipodal quotient, we record how the concrete
$S_3$-action above is reflected in the block decomposition of the
$176$-line arrangement constructed in part~\textup{(B)}.  The element
$\rho$ cyclically permutes the three surfaces of the first triple,
\[
\rho(\Phi_0)=\Phi_1,\qquad
\rho(\Phi_1)=\Phi_2,\qquad
\rho(\Phi_2)=\Phi_0.
\]
Hence the orbit of the line set $\mathcal L(\Phi_0)$ under the cyclic
subgroup $\langle\rho\rangle$ is exactly the union of the line sets of
these three surfaces.  We denote this union by
\[
\mathcal U_{\Phi}
:=
\bigcup_{j=0}^{2}\mathcal L(\rho^j\Phi_0)
=
\mathcal L(\Phi_0)\cup
\mathcal L(\Phi_1)\cup
\mathcal L(\Phi_2).
\]

By the block decomposition~(4.8), the three surfaces have the common
$16$-line core $\mathcal H$, while their remaining lines are precisely
the nine mutually disjoint blocks $\mathcal B_{k,c}$.  Hence
\[
\mathcal U_{\Phi}
=
\mathcal H\sqcup
\bigsqcup_{k=0}^{2}\ \bigsqcup_{c^3=1}\mathcal B_{k,c},
\qquad
|\mathcal U_{\Phi}|=16+9\cdot16=160.
\tag{4.13}
\]

Now $\sigma$ interchanges the two triples of surfaces:
\[
\sigma(\Phi_0)=C_1,\qquad
\sigma(\Phi_1)=C_2,\qquad
\sigma(\Phi_2)=C_3.
\]
Therefore
\[
\sigma(\mathcal U_{\Phi})
=
\mathcal L(C_1)\cup
\mathcal L(C_2)\cup
\mathcal L(C_3).
\]
The second triple contains the same nine cross-blocks
$\mathcal B_{k,c}$, but its common $16$-line core is $\mathcal K$
instead of $\mathcal H$.  Consequently
\[
\mathcal U_{\Phi}\cup\sigma(\mathcal U_{\Phi})
=
\mathcal H\sqcup\mathcal K\sqcup
\bigsqcup_{k=0}^{2}\ \bigsqcup_{c^3=1}\mathcal B_{k,c},
\]
which is exactly the $176$-line arrangement of~(4.8)--(4.9).  Thus,
starting from the $64$ lines of $\Phi_0$, the cyclic action generated
by $\rho$ produces the $160$ lines carried by the first triple of
quartics, and applying $\sigma$ supplies precisely the one missing
$16$-line block $\mathcal K$.

There is also a projective realization of the antipodal operation of
Section~2.  On $X=\Phi_0$, let $L_r\in\mathcal D$ denote the line carrying
the root label $r\in\Phi(D_4)$ in H\"ohn's $D_4$ labelling.  The antipodal
permutation
\[
L_r\longmapsto L_{-r}
\]
extends uniquely to a projective automorphism of the complete $64$-line
configuration of $X$.  In the coordinates used here this automorphism is
the involution
\[
\iota=\operatorname{diag}(1,1,-1,-1).
\tag{4.14}
\]
Thus
\[
\iota(L_r)=L_{-r},
\qquad r\in\Phi(D_4),
\]
so the root-theoretic antipodal involution used in Section~2 is exactly the
restriction of the projective involution $\iota$ to $\mathcal D$.  On the
remaining lines of $X$, the same involution fixes each of the $16$ lines of
$\mathcal H$ and pairs the $24$ lines of $\mathcal D^*$ into $12$
two-line orbits.

The same projective transformation preserves the whole six-quartic
arrangement.  Indeed, since $\phi$ and $h$ are homogeneous of degree four,
$\iota$ preserves every $\Phi_k$ and every $\Psi_c$.  On each $\Phi_k$ it
fixes the $16$ lines of the common core $\mathcal H$ and pairs the remaining
$48$ lines; on each $\Psi_c$ it fixes the $16$ lines of the common core
$\mathcal K$ and again pairs the remaining $48$.  In particular, each
cross-block $\mathcal B_{k,c}$ is folded from $16$ lines to $8$ orbits.

For $X=\Phi_0$, write
\[
\overline{\mathcal D}
   :=\mathcal D/\langle\iota\rangle,
\qquad
\overline{\mathcal D^*}
   :=\mathcal D^*/\langle\iota\rangle .
\]
Since the $16$ lines of $\mathcal H$ are fixed while the lines of
$\mathcal D$ and $\mathcal D^*$ are paired, the line-orbit set is
\[
\mathcal L(X)/\langle\iota\rangle
 =\mathcal H\sqcup\overline{\mathcal D}\sqcup\overline{\mathcal D^*},
\qquad
40=16+12+12.
\tag{4.15}
\]
To make the incidence structure of this quotient explicit, let
\[
x=\{L,\iota(L)\}\in
\overline{\mathcal D}\cup\overline{\mathcal D^*}
\]
be a two-line orbit, and define its neighbourhood in the fixed
$16$-line class by
\[
N_{\mathcal H}(x)
:=
\{\,H\in\mathcal H:H\cap L\ne\varnothing\,\}.
\]
This is well defined: since $\iota$ fixes every line of $\mathcal H$,
the two lines $L$ and $\iota(L)$ meet exactly the same members of
$\mathcal H$.

The exact incidence table of the $64$ Schur lines now gives the key
fact.  For $x\in\overline{\mathcal D}$, each set
$N_{\mathcal H}(x)$ has four elements, the twelve such neighbourhoods
are distinct, and every line of $\mathcal H$ occurs in exactly three of
them.  The resulting $(12_4,16_3)$ incidence structure is isomorphic to
the Reye configuration identified in Section~2.  The same statements
hold with $\overline{\mathcal D}$ replaced by
$\overline{\mathcal D^*}$.  Moreover,
\[
\bigl\{N_{\mathcal H}(x):x\in\overline{\mathcal D}\bigr\}
=
\bigl\{N_{\mathcal H}(y):y\in\overline{\mathcal D^*}\bigr\}.
\]
Thus the two quotient classes give two copies of the Reye point set over
one and the same $16$-line skeleton $\mathcal H$.

Notice that the $16$ line-objects in this realization are the actual
fixed Schur lines of $\mathcal H$; in Section~2 the corresponding
$16$ objects arose as antipodal pairs of triple points.  The statement
here is therefore an isomorphism of incidence structures, not an
identification of these two sets of geometric objects.

Because the twelve neighbourhoods are distinct and the two displayed
collections coincide, every
$x\in\overline{\mathcal D}$ has a unique
$y\in\overline{\mathcal D^*}$ with
\[
N_{\mathcal H}(x)=N_{\mathcal H}(y).
\]
Pairing $x$ with this unique $y$ and identifying each such pair folds
the $40$ quotient objects further to
\[
16+12=28,
\]
recovering a single $(12_4,16_3)$ Reye configuration.

The same construction applies to all six Schur quartics, with the fixed
$16$-line class $\mathcal H$ on the first triple and $\mathcal K$ on the
second.  On the complete $176$-line arrangement, $\iota$ fixes the
$32$ lines of $\mathcal H\sqcup\mathcal K$ and pairs the $16$ lines in
each of the nine cross-blocks $\mathcal B_{k,c}$.  Hence the global
line-orbit set has
\[
104=16+16+9\cdot8
\tag{4.16}
\]
elements.

Together with the simply transitive $S_3$-actions and the description of
the six induced $D_4$ labellings above, this proves
part~\textup{(C)} of Theorem~\ref{thm:main}.
\end{proof}

The content of part~\textup{(C)} may therefore be summarized as follows.
The two-sheeted Reye quotient of Section~2 is not confined to a single
Naskr\k{e}cki--Pokora configuration.  On each Schur quartic it extends to
the full $64$-line configuration, producing two Reye point classes over one
common $16$-line class; on the complete six-surface arrangement the same
projective involution produces the global $104$-orbit quotient~(4.16).
This is the sense in which the $64$-line configuration carries a
\emph{double-Reye} structure.

We finally relate this structure to classical desmic geometry.  A desmic
system, introduced by Stephanos~\cite{Stephanos79}, consists of three
tetrahedra in $\PP^3$ such that each pair is perspective from each vertex
of the third.  The corresponding pencil of quartic surfaces contains the
three tetrahedra as reducible members; an irreducible member of this pencil
is called a \emph{desmic quartic}. Such a quartic has $12$ ordinary nodes and $16$ distinguished lines,
and these form a $(12_4,16_3)$ incidence configuration isomorphic to
the Reye configuration~\cite{DK25}.  The $12$ vertices of the three
desmic tetrahedra are different from these nodes: they occur instead as
the $12$ nodes of a desmic quartic belonging to the conjugate desmic
pencil~\cite{DK25}.  We shall refer to the two Reye configurations
arising from these two desmic pencils as a \emph{conjugate-desmic pair}.

The classical conjugate-desmic pair can now be compared directly with the
double-Reye quotient of the Schur quartic obtained above.  Both constructions
contain two Reye configurations, but the way in which the two copies are
realized in $\PP^3$ is different.  In the classical conjugate-desmic pair
the two Reye configurations have distinct $16$-line skeletons.  In the Schur
quotient, by contrast, the two $12$-element point classes are incident with
one and the same $16$-line skeleton $\mathcal H$.  Thus the connection with desmic geometry lies in the occurrence of the same Reye incidence structure in both constructions; it does not extend to
an identification of the two paired realizations in $\PP^3$.

\section{Conclusions and open problems}

The main picture established in this paper is that the
Naskr\k{e}cki--Pokora $(24_4,32_3)$ configuration on the Schur quartic is
not an isolated incidence phenomenon.  Under H\"ohn's $D_4$ labelling, the
antipodal identification $r\sim-r$ turns it into the classical Reye
configuration, and the same quotient geometry extends from this $24$-line
subconfiguration to the complete $64$-line configuration and then to a
six-quartic arrangement of $176$ lines.

On a single Schur quartic there are exactly two Naskr\k{e}cki--Pokora
subconfigurations, $\mathcal D$ and $\mathcal D^*$, and the full incidence
automorphism group exchanges them.  For either one, projectivizing the
$D_4$ roots gives the sequence
\[
D_4\text{ root geometry}
\longrightarrow
(24_4,32_3)
\longrightarrow
(12_4,16_3)_{\rm Reye}.
\]
The last arrow is not a numerical coincidence: it is the antipodal
projectivization of the root system itself.  The complete $64$-line
incidence geometry has automorphism group
\[
2_{+}^{1+6}\rtimes(S_3\times S_3)
\]
of order $4608$, with the two $24$-line lifts forming a single orbit.

At the next level, the six projectively equivalent Schur quartics have
exactly
\[
176=16+16+9\cdot16
\]
distinct lines: two common $16$-line cores and nine pairwise disjoint
$16$-line cross-intersection blocks.  Their induced permutation group is
$S_3\times S_3$, and each factor acts simply transitively on the six
surfaces.  Thus the surface set is naturally an $S_3$-bitorsor.  Transporting
H\"ohn's $D_4$ labelling through the concrete simply transitive $S_3$-action
produces exactly the six outer-equivalence classes of $D_4$ labellings and
realizes
\[
\operatorname{Out}(D_4)
\cong
W(F_4)/W(D_4)
\cong S_3.
\]

A second structure runs through the same six surfaces.  The single
projective involution
\[
\iota=\operatorname{diag}(1,1,-1,-1)
\]
preserves all six quartics and extends the antipodal pairing of the original
$D_4$ quotient.  On each Schur quartic it gives
\[
64=16+24+24
\longrightarrow
40=16+12+12.
\]
Each of the two $12$-element classes, together with the same fixed
$16$-element class, is a Reye configuration; moreover the two
$12$-element classes determine the same incidence neighbourhoods in that
common $16$-line skeleton.  This is the double-Reye structure.  On the
complete $176$-line arrangement the same involution gives
\[
176=16+16+9\cdot16
\longrightarrow
104=16+16+9\cdot8.
\]

The comparison with classical desmic geometry is correspondingly precise.
A desmic quartic carries a Reye configuration, and the conjugate desmic
pencil supplies a second one.  What the Schur construction shares with this
classical picture is the occurrence of the same Reye incidence structure.
What it does not share is the projective realization of the pair: a
classical conjugate-desmic pair has two distinct $16$-line skeletons,
whereas the two Reye point classes in the Schur quotient are supported on
one and the same $16$-line skeleton.

With the incidence and group-theoretic structure fixed, the remaining
questions are mainly conceptual.

\begin{enumerate}

\item \textbf{Intrinsic characterization of the $176$-line geometry.}
Can the abstract $176$-line multi-incidence geometry itself determine the
six Schur quartics, their partition into two triples, the two common
$16$-line cores, the nine cross-intersection blocks, and the involution
$\iota$?  Equivalently, can the whole configuration be characterized
axiomatically, without starting from the equations~(4.1) and~(4.3), in a
way that makes the $S_3\times S_3$ symmetry intrinsic?

\item \textbf{A surface-level meaning of the antipodal fold.}
The involution $\iota$ gives exact incidence quotients on all six Schur
quartics simultaneously.  Is there a natural algebraic-geometric
description of the quotient of a Schur quartic by
$\langle\iota\rangle$---for example through a quotient surface, a
resolution, a rational map, or a distinguished correspondence---whose
action on the lines explains the Reye projection?  Can the six surface
quotients be organized into one construction reflecting the global
$104$-object quotient?

\item \textbf{The geometric origin of the desmic--Hessian relation.}
The incidence comparison with desmic geometry is now explicit, while the
second triple of Schur quartics is produced by the binary Hessian of the
quartic defining the first triple.  Is there a direct algebraic-geometric
mechanism relating these facts?  In particular, can the Hessian construction
be connected naturally with the two desmic pencils of~\cite{DK25} in a way
that also explains why the Schur double-Reye quotient has one common
$16$-line skeleton rather than two distinct ones?

\item \textbf{Coxeter and building interpretations.}
The constructions above involve the $D_4$ root system, the $24$-cell,
$W(F_4)$, and the quotient
\[
W(F_4)/W(D_4)\cong S_3.
\]
Is there a coordinate-free Coxeter-theoretic interpretation of the full
six-surface geometry?  Can the passage from $D_4$ root geometry to Reye
geometry, or the $S_3$-bitorsor of the six quartics, be realized as a typed
shadow, residue, or related construction in a building or chamber geometry?
If not, identifying the precise obstruction would itself clarify the role of
the $F_4$ structure.

\end{enumerate}

All incidence counts and finite-group calculations specific to the Schur
configurations were carried out in exact arithmetic; no floating-point
approximation enters the statements above.  What remains is not a larger
census, but a conceptual explanation of why the $176$-line geometry, the
outer-$D_4$ structure, and the double-Reye quotient arise together in one
projective construction.

\end{document}